\documentclass[a4paper]{article}

\usepackage[english]{babel}
\usepackage[utf8]{inputenc}
\usepackage{amsmath}

\usepackage[a4paper,top=3cm,bottom=2cm,left=3cm,right=3cm,marginparwidth=1.75cm]{geometry}

\usepackage{amsthm}
\usepackage{amsfonts}
\usepackage{graphicx}
\usepackage[colorinlistoftodos]{todonotes}
\usepackage[colorlinks=true, allcolors=blue]{hyperref}
\usepackage{url}

\newtheorem{lem}{Lemma}
\newtheorem{proposition}{Proposition}
\newtheorem{theorem}{Theorem}
\newtheorem{rmk}{Remark}
\newtheorem{example}{Example}

\title{Primality tests, linear recurrent sequences and the Pell equation}

\author{Danilo Bazzanella$^*$, Antonio Di Scala$^*$, Simone Dutto$^*$, Nadir Murru$^{**}$ \\
$^*$ Politecnico of Turin, Corso Duca degli Abruzzi 24, Torino, 10129, ITALY \\
$^{**}$ University of Turin, Via Carlo Alberto 10, Torino, 10123, ITALY}
\date{}

\begin{document}

\maketitle

\begin{abstract}
We study new primality tests based on linear recurrent sequences of degree two exploiting a matricial approach. The classical Lucas test arises as a particular case and we see how it can be easily improved. Moreover, this approach shows clearly how the Lucas pseudoprimes are connected to the Pell equation and the Brahamagupta product. We also introduce a new specific primality test, which we will call generalized Pell test. We perform some numerical computations on the new primality tests and, for the generalized Pell test, we do not any pseudoprime up to $10^{10}$.
\end{abstract}

\noindent \textbf{Keywords:} linear recurrent sequence; Lucas pseudoprime; Pell equation; Pell pseudoprime; primality test. \\
\textbf{2010 Mathematics Subject Classification:} Primary: 11Y11; Secondary: 11B39.

\section{Introduction}
The Pell equation is one of the most famous and studied Diophantine equation, it is
\[x^2 - D y^2 = 1\]
for $D$ non--square integer. Recently, its properties have been exploited in cryptographic applications for defining an RSA--like cryptosystem with multi--factor modulus \cite{NUMTA}. In this paper, we show how it is connected to primality tests, allowing to define new ones that appear to be very interesting.

One of the most classical primality test is based on the Little Fermat's Theorem, i.e., the Fermat test. It is known that there are infinitely many composite numbers that pass the test to every base \cite{AGP94}. However, it is possible to define a stronger test considering that for $p = 2^r s + 1$ prime, then
\[ a^s \equiv 1 \pmod p \quad \text{or} \quad a^{2^k s} \equiv -1 \pmod p \]
for any $a \in \mathbb Z_p^*$ and some $0 \leq k < r$. An odd composite number satisfying this condition is a \emph{strong pseudoprime} to base $a$ and it is known that there are no strong pseudoprimes to all bases. See \cite{PSW80} for a classical study on these pseudoprimes. The Baillie--PSW primality test combines the above test with the Lucas test \cite{BW80}; an overlap between strong and Lucas pseudoprimes has not been found so far. However, it is conjectured that there are infinitely many Baillie--PSW pseudoprimes and Pomerance gave an idea for constructing them \cite{Pom84}. Some calculations about the search of Baillie--PSW pseudoprimes can be also found in \cite{CG03}. The Lucas test is based on some properties of the Lucas sequence. Given two integers $P$ and $Q$ the Lucas sequence is defined by
\[ \begin{cases}  U_0 = 0, U_1 = 1 \cr U_k = P U_{k-1} + Q U_{k-2} \end{cases} \]
for any $k \geq 2$ and it can be evaluated by means of
\[ U_k = \cfrac{\alpha^k - \beta^k}{\alpha - \beta}, \]
where $\alpha, \beta$ are the roots of the characteristic polynomial. The Lucas test is based on the fact that when $p$ is prime, we have
\[ U_{p-1} \equiv 0 \pmod p \quad \text{or} \quad U_{p+1} \equiv 0 \pmod p \]
when $\left( \cfrac{D}{p} \right) = 1$ or $\left( \cfrac{D}{p} \right) = -1$ (Jacobi symbols), respectively, for $D = P^2 - 4 Q$. Thus, the Lucas pseudoprimes, with parameters $P$ and $Q$, are the odd composite integers $n$ such that
\begin{equation} \label{eq:Lucas}
U_{n-\left( D/n \right)} \equiv 0 \pmod n. 
\end{equation}
The Lucas pseudoprimes have been widely studied, see, e.g., \cite{DF88, GP91, Pom10, Som09, Suw12}. Some authors also studied primilaty tests using more general linear recurrence sequences \cite{Gra10, LS07}.

In \cite{DMS19}, the authors highlighted how the Lucas test can be introduced in an equivalent way by means of the Brahmagupta product and the Pell equation. We recall here some facts.

It is well--known that given two solutions $(x_1, y_1)$ and $(x_2, y_2)$ of the Pell equation, then the Brahmagupta product 
\[ (x_1, y_1) \otimes (x_2, y_2) = (x_1 x_2 + D y_1 y_2, x_1 y_2 + x_2 y_1) \]
yields to another solution of the Pell equation. For a complete survey, we refer the reader to \cite{Bar03}. 
Given a ring $\mathcal R$, we can consider the Pell conic
\[ \mathcal C = \{ (x,y) \in \mathcal R \times \mathcal R : x^2 - D y^2 = 1 \} \]
and $(\mathcal C, \otimes)$ is a group with identity $(1,0)$. Moreover, when $\mathcal R = \mathbb Z_p$, the order of $\mathcal C$ depends on $D$ to be or not a quadratic residue. In particular, we have $\lvert \mathcal C \rvert = p - 1$ if $D$ is a quadratic residue in $\mathbb Z_p$ and $\lvert \mathcal C \rvert = p + 1$ if not, see, e.g., \cite{MV92}. This property allows to construct a primality test. In \cite{DMS19}, the authors defined the \emph{Pell pseudoprimes}, as the odd composite integers $n$ such that
\begin{equation*}
y_n \equiv 0 \pmod n
\end{equation*}
with $(x_n, y_n) = (\tilde x, \tilde y)^{\otimes n - (D/n)}$, where $D$ and $(\widetilde{x}, \widetilde{y}) \in \mathcal C$ are the parameters of the test, for $\mathcal R = \mathbb Z_n$. With this definition, we have an equivalence between the Lucas and Pell test described in the following theorem.

\begin{theorem} 
On the one hand, if $n$ is a Lucas pseudoprime with parameters $P > 0$ and $Q = 1$, then $n$ is a Pell pseudoprime with parameter $\widetilde x \equiv P/2 \pmod n$, $\widetilde y \equiv 1/2 \bmod n$, and $D = P^2 - 4$. 
On the other hand, if $n$ is a Pell pseudoprime with parameter $\widetilde x$, $\widetilde y$, and $D$, then $n$ is a Lucas pseudoprime with parameters $P = 2 \widetilde x$, and $Q = 1$ \cite{DMS19}.
\end{theorem}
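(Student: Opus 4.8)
\noindent\emph{Sketch of a proof.}
The plan is to reduce both pseudoprimality conditions to a single algebraic identity, $y_k=\widetilde y\,U_k$, relating the second coordinate of the $\otimes$-powers of $(\widetilde x,\widetilde y)$ to the Lucas term $U_k$, and then to check that the exponent $n-(D/n)$ occurring in the two conditions is the same. The conceptual reason for the identity is that the map
\[
\phi\colon(\mathcal C,\otimes)\longrightarrow\bigl(\mathbb Z_n[t]/(t^2-D)\bigr)^{\times},\qquad (x,y)\longmapsto x+y\sqrt D
\]
is an injective group homomorphism: it is nothing but the Brahmagupta product read off on the basis $\{1,\sqrt D\}$, and $(x+y\sqrt D)(x-y\sqrt D)=x^2-Dy^2=1$ shows that the image lies in the units. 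Hence $(\widetilde x,\widetilde y)^{\otimes k}$ corresponds to $(\widetilde x+\widetilde y\sqrt D)^k$. When $Q=1$ and $P=2\widetilde x$, the Pell relation $\widetilde x^2-D\widetilde y^2=1$ says exactly that $\alpha=\widetilde x+\widetilde y\sqrt D$ and $\beta=\widetilde x-\widetilde y\sqrt D$ are the two roots of $t^2-Pt+Q$, so $y_k=\frac{\alpha^k-\beta^k}{2\sqrt D}$ while $U_k=\frac{\alpha^k-\beta^k}{\alpha-\beta}=\frac{\alpha^k-\beta^k}{2\widetilde y\sqrt D}$, and therefore $y_k=\widetilde y\,U_k$.

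To turn this into a rigorous argument that does not depend on whether $\sqrt D$ exists modulo $n$, I would argue matricially, in keeping with the approach of the paper. The Brahmagupta recursion is
\[
\begin{pmatrix}x_{k+1}\\ y_{k+1}\end{pmatrix}=M\begin{pmatrix}x_k\\ y_k\end{pmatrix},\qquad M=\begin{pmatrix}\widetilde x & D\widetilde y\\ \widetilde y & \widetilde x\end{pmatrix},
\]
so $(x_k,y_k)^{\top}=M^k(1,0)^{\top}$. Since $\operatorname{tr}M=2\widetilde x=P$ and $\det M=\widetilde x^2-D\widetilde y^2=1=Q$, Cayley--Hamilton gives $M^2=PM-QI$, whence $(y_k)_{k\ge 0}$ satisfies the same second-order recurrence $z_{k+1}=Pz_k-Qz_{k-1}$ as the Lucas sequence, with $y_0=0=U_0$ and $y_1=\widetilde y=\widetilde y\,U_1$. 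Linearity then yields $y_k=\widetilde y\,U_k$ for every $k\ge 0$.

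Granting the identity, the two implications are quick. If $n$ is an odd Lucas pseudoprime with $P>0$, $Q=1$, then $U_{n-(D/n)}\equiv 0\pmod n$ with $D=P^2-4$; put $\widetilde x\equiv P/2$, $\widetilde y\equiv 1/2\pmod n$ (legitimate since $n$ is odd) and keep the same $D$. Then $\widetilde x^2-D\widetilde y^2=(P^2-D)/4=1$, so $(\widetilde x,\widetilde y)\in\mathcal C$ over $\mathbb Z_n$; the Pell parameter $D$ is literally the Lucas one, hence $n-(D/n)$ is unchanged, and $y_{n-(D/n)}=\widetilde y\,U_{n-(D/n)}\equiv 0\pmod n$: thus $n$ is a Pell pseudoprime with these parameters. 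Conversely, if $n$ is an odd Pell pseudoprime with parameters $\widetilde x,\widetilde y,D$, set $P=2\widetilde x$, $Q=1$. The discriminant attached to this Lucas sequence is $D'=P^2-4Q=4(\widetilde x^2-1)\equiv 4D\widetilde y^2\pmod n$; since $n$ is odd and $\widetilde y$ is invertible modulo $n$, $D'$ differs from $D$ by a square unit, so $(D'/n)=(D/n)$ and the exponent $m=n-(D/n)$ is common to both tests. Then $U_m=\widetilde y^{-1}y_m\equiv 0\pmod n$, i.e.\ $n$ is a Lucas pseudoprime with parameters $P,Q$.

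The delicate point is precisely this matching of exponents. In the second implication the Lucas discriminant that surfaces is $4\widetilde y^2 D$ and not $D$, so one must invoke the invertibility of $2\widetilde y$ modulo $n$ (a non-degeneracy implicit in the Pell test) and multiplicativity of the Jacobi symbol to conclude $(D'/n)=(D/n)$; and in the first implication the hypotheses $P>0$, $Q=1$ are exactly what keep the constructed pair on $\mathcal C$ with the same $D$. Everything else — the homomorphism property of $\phi$, the Cayley--Hamilton step, and the two identities $y_k=\widetilde y\,U_k$ and $U_k=\widetilde y^{-1}y_k$ — is a routine verification once the matricial setup is in place.
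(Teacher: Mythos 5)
Your argument is correct, and it is essentially the matricial argument the paper itself uses: the key identity $y_k=\widetilde y\,U_k$ is exactly what falls out of the paper's conjugation $R_2^{-1}CR_2=L$ with $R_2=\left(\begin{smallmatrix}1 & -\widetilde x\\ 0 & \widetilde y\end{smallmatrix}\right)$, since $C^k(1,0)^{\top}=R_2L^k(1,0)^{\top}=R_2(U_{k+1},U_k)^{\top}$. (The paper only cites this particular theorem from [DMS19] and spells out the conjugation for the double Lucas / strong Pell analogue, so your write-up actually supplies a proof the paper omits.) Your Cayley--Hamilton derivation of the identity is marginally cleaner than the explicit conjugation, because it does not presuppose that $\widetilde y$ is invertible; but, as you correctly note, that invertibility (together with $2\in\mathbb Z_n^*$) is unavoidable in the Pell-to-Lucas direction both to pass from $y_m\equiv 0$ to $U_m\equiv 0$ and to match the Jacobi symbols $(D'/n)=(4D\widetilde y^2/n)=(D/n)$ --- the same non-degeneracy the paper tacitly assumes when it inverts $R_2$. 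One small quibble: the hypothesis $P>0$ plays no role in keeping $(P/2,1/2)$ on $\mathcal C$ (the relation $\widetilde x^2-D\widetilde y^2=1$ holds for either sign of $P$); it is simply a normalization inherited from [DMS19], so your closing remark overstates its function.
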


Considering the order of the Pell conic over finite fields $\mathbb Z_p$, we can clearly define a stronger test. Hence, we define the \emph{strong Pell pseudoprimes} as the odd composite integers $n$ such that
\[ (\tilde x, \tilde y)^{\otimes n - (D/n)} \equiv (1, 0) \pmod n, \]
where $D$ and $(\widetilde{x}, \widetilde{y}) \in \mathcal C$ are the parameters of the test, for $\mathcal R = \mathbb Z_n$.

\begin{rmk}
In \cite{Lem03}, the author highlighted the properties of the Pell conic for constructing a primality test, but only focused on a Pell conic of the kind $x^2 - Dy^2 = 4$, as well as in \cite{Ham12}, where the author focused on $x^2 + 3 y^2 = 4$ for testing numbers of the form $3^n h \pm 1$.
Moreover, we would like to point out that sometimes the term Pell pseudoprimes is used for the Lucas pseudoprimes with parameters $P = 2$ and $Q = -1$, since for these parameters the sequence $U_n$ is known as the Pell sequence (A000129 in OEIS \cite{Slo}). We have also found a different definition of Pell pseudoprimes that are the odd composite integers $n$ such that
\begin{equation*}
 U_n \equiv \left( \cfrac{2}{n} \right) \pmod n
\end{equation*}
for $P = 2$ and $Q = -1$ (A099011 in OEIS).
\end{rmk}

In this paper, firstly, in section \ref{sec:mat}, we show how many primality tests based on linear recurrent sequences of order 2 can be introduced from a matricial point of view. The Lucas test and the Pell test, as well as their connection, arise as particular cases. Moreover, in this way, we are able to introduce a generalized Pell test based on the quotient ring $\mathcal R[t] / (t^2 -D)$, for $D \in \mathcal R$, which also has an analogue via Lucas sequence as we will see. Then, in section \ref{sec:exp}, we perform numerical experiments and comparison between some primality tests with a special focus on the generalized Pell test. In particular, we will show a method for the choice of the parameters, inspired to the Selfridge method, that produces very promising results. Indeed, we did not found any composite numbers that pass the generalized Pell test, with such method for the choiche of parameters, up to $10^{10}$.

\section{Pseudoprimes with matrices} \label{sec:mat}

Given a matrix $M \in \mathbb Z^{2 \times 2}$, we can consider the linear recurrence sequences $(\widetilde U_k)_{k \geq 0}$ and $(\widetilde V_k)_{k \geq 0}$ defined by
\begin{equation*}
  \begin{pmatrix} \widetilde V_k \cr \widetilde U_k \end{pmatrix} := M^k \begin{pmatrix} 1 \cr 0 \end{pmatrix}.
\end{equation*}
The following lemma provides a primality test based on these sequences, the Lucas and strong Pell tests arise for particular choices of $M$.
Hence, this lemma will allow to highlight many primality tests based on linear recurrence sequences of order 2 and the connection between the Lucas test and the Pell conic.

\begin{lem} \label{lem:main}
Let $\Delta$ be the discriminant of the characteristic polynomial of $M \in \mathbb Z^{2 \times 2}$, if $p$ is prime and $\det M \not= 0 \pmod p$, then
\begin{enumerate}
\item $\widetilde U_{p-1} \equiv 0 \pmod p$ and $\widetilde V_{p-1} \equiv 1 \pmod p$, when $\sqrt{\Delta} \in \mathbb Z_p^*$;
\item $\widetilde U_{p+1} \equiv 0 \pmod p$ and $\widetilde V_{p+1} \equiv \det M \pmod p$, when $\sqrt{\Delta} \not\in \mathbb Z_p^*$.
\end{enumerate}
\end{lem}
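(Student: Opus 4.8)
The plan is to diagonalize $M$ over an appropriate extension of $\mathbb{Z}_p$ and track how the Frobenius endomorphism acts. Write the characteristic polynomial of $M$ as $x^2 - (\operatorname{tr} M)x + \det M$, with roots $\alpha, \beta$ lying either in $\mathbb{Z}_p$ (case 1, when $\sqrt{\Delta} \in \mathbb{Z}_p^*$) or in the quadratic extension $\mathbb{F}_{p^2}$ (case 2). Since $\det M \not\equiv 0 \pmod p$, both eigenvalues are nonzero, and since $\sqrt{\Delta}$ is a unit the eigenvalues are distinct, so $M$ is diagonalizable over the relevant field. Then $M^k$ has a closed form, and in particular the two entries of $M^k \binom{1}{0}$ — namely $\widetilde V_k$ and $\widetilde U_k$ — become explicit $\mathbb{Z}_p$-linear (or $\mathbb{F}_{p^2}$-linear) combinations of $\alpha^k$ and $\beta^k$. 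The key classical facts are $\alpha^{p-1} \equiv \beta^{p-1} \equiv 1$ when $\alpha,\beta \in \mathbb{Z}_p^*$ (Fermat), and $\alpha^p \equiv \beta$, $\beta^p \equiv \alpha$ when they are conjugate over $\mathbb{F}_p$ (the Frobenius swaps the roots), whence $\alpha^{p+1} \equiv \beta^{p+1} \equiv \alpha\beta = \det M$.

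First I would set up the change of basis: let $v_\alpha, v_\beta$ be eigenvectors, write $\binom{1}{0} = c_\alpha v_\alpha + c_\beta v_\beta$, so that $M^k \binom{1}{0} = c_\alpha \alpha^k v_\alpha + c_\beta \beta^k v_\beta$. In case 1, evaluate at $k = p-1$: since $\alpha^{p-1} = \beta^{p-1} = 1$ in $\mathbb{Z}_p$, we get $M^{p-1}\binom{1}{0} \equiv c_\alpha v_\alpha + c_\beta v_\beta = \binom{1}{0} \pmod p$, which reads off as $\widetilde V_{p-1} \equiv 1$ and $\widetilde U_{p-1} \equiv 0$. In case 2, evaluate at $k = p+1$: using $\alpha^{p+1} = \beta^{p+1} = \det M$ (a scalar in $\mathbb{Z}_p$), we get $M^{p+1}\binom{1}{0} \equiv (\det M)(c_\alpha v_\alpha + c_\beta v_\beta) = (\det M)\binom{1}{0} \pmod p$, giving $\widetilde V_{p+1} \equiv \det M$ and $\widetilde U_{p+1} \equiv 0$.

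An alternative, perhaps cleaner, route avoids eigenvectors entirely: by Cayley--Hamilton, $M$ satisfies its characteristic polynomial, so the ring $\mathbb{Z}_p[M]$ is a quotient of $\mathbb{Z}_p[x]/(x^2 - (\operatorname{tr} M)x + \det M)$. In case 1 this quotient splits as $\mathbb{Z}_p \times \mathbb{Z}_p$ and the image of $M$ is a unit (as $\det M \ne 0$), so $M^{p-1} \equiv I$ in $\mathbb{Z}_p[M]$ by Fermat applied componentwise; in case 2 the quotient is $\mathbb{F}_{p^2}$ and one uses that the norm $N(M) = \det M$ together with $M^{p+1} = M \cdot M^p = M \cdot \overline{M} = N(M) = \det M$, i.e. $M^{p+1} \equiv (\det M) I$. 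Applying either congruence to the vector $\binom{1}{0}$ yields both claimed identities at once. I would likely present this version since it sidesteps the separate handling of eigenvector coordinates.

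The main obstacle is purely bookkeeping: one must be careful that the identities $M^{p-1} \equiv I$ and $M^{p+1} \equiv (\det M)I$ hold as congruences of \emph{matrices over $\mathbb{Z}_p$}, not merely up to the ideal generated by the characteristic polynomial in some larger ring — this is fine because $M$ literally lies in $\mathbb{Z}^{2\times 2}$ and Cayley--Hamilton is an identity of integer matrices, so reducing mod $p$ and then invoking the structure of $\mathbb{Z}_p[x]/(\chi_M)$ is legitimate. The only genuine case distinction is whether $\chi_M$ is split or irreducible mod $p$, which is exactly the condition $\sqrt{\Delta} \in \mathbb{Z}_p^*$ versus $\sqrt{\Delta} \notin \mathbb{Z}_p^*$ (the unit hypothesis on $\sqrt{\Delta}$ also rules out the ramified case $p \mid \Delta$, so the two listed cases are exhaustive). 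Reading off the two coordinates of $M^k\binom{1}{0}$ then gives the statement.
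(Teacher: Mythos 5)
Your proposal is correct and follows essentially the same route as the paper: diagonalize $M$ over $\mathbb{Z}_p$ or $\mathbb{F}_{p^2}$, then apply Fermat's little theorem to get $M^{p-1}\equiv I$ in the split case and the Frobenius swap $\alpha^p=\beta$, $\beta^p=\alpha$ to get $M^{p+1}\equiv(\det M)I$ in the inert case. Your Cayley--Hamilton variant via $\mathbb{Z}_p[x]/(\chi_M)$ is a cosmetic repackaging of the same idea, and your remarks on distinctness of the eigenvalues and exhaustiveness of the two cases are careful points the paper glosses over.
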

\begin{proof}
Let $\alpha, \beta$ be the roots of the characteristic polynomial of $M$, we have that $M$ is similar to the diagonal matrix
\begin{equation*}
\begin{pmatrix} \alpha & 0 \cr 0 & \beta \end{pmatrix}.
\end{equation*}
Thus, when $\sqrt{\Delta} \in \mathbb Z_p^*$, we also have $\alpha, \beta \in \mathbb Z_p^*$ and $M^{p-1}$ is the identity matrix modulo $p$ by the Little Fermat's Theorem, then
\begin{equation*}
 \begin{pmatrix} \tilde V_{p-1} \cr \tilde U_{p-1} \end{pmatrix} = M^{p-1} \begin{pmatrix} 1 \cr 0 \end{pmatrix} \equiv \begin{pmatrix} 1 \cr 0 \end{pmatrix} \pmod p.
\end{equation*}
When $\sqrt{\Delta} \not\in \mathbb Z_p^*$, by the Frobenius morphism we have $\alpha^p = \beta$, $\beta^p = \alpha$ and
\begin{equation*}
 \begin{pmatrix} \tilde V_{p+1} \cr \tilde U_{p+1} \end{pmatrix} = M^p \cdot M \begin{pmatrix} 1 \cr 0 \end{pmatrix} \equiv \det M \begin{pmatrix} 1 \cr 0 \end{pmatrix} \pmod p
\end{equation*}
\end{proof}

In the following we see that the Lucas and Pell tests arise as particular cases of the previous Lemma.

\begin{lem} \label{prop:LC}
Given 
\begin{equation*}
L = \begin{pmatrix} P & -Q \cr 1 & 0 \end{pmatrix}, \quad C = \begin{pmatrix} \widetilde{x} & D \widetilde{y} \cr \widetilde{y} & \widetilde{x} \end{pmatrix},
\end{equation*}
we have
\begin{equation*}
 L^k \begin{pmatrix} 1 \cr 0 \end{pmatrix} = \begin{pmatrix} U_{k+1} \cr U_k \end{pmatrix}, \quad  C^k \begin{pmatrix} 1 \cr 0 \end{pmatrix} = \begin{pmatrix} x_k \cr y_k \end{pmatrix}
\end{equation*}
where $(U_k)_{k \geq 0}$ is the Lucas sequence with characteristic polynomial $t^2 - P t + Q$, $(\widetilde{x}, \widetilde{y}) \in \mathcal C$ (for any ring $\mathcal R$), and $(x_k, y_k) = (\widetilde{x}, \widetilde{y})^{\otimes k}$, for any $k \geq 0$.
\end{lem}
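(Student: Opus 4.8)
The plan is to prove both identities by induction on $k$, since the matrices $L$ and $C$ are precisely the companion-type matrices that encode the two recurrences. First I would check the base case $k=0$: both $L^0$ and $C^0$ are the identity, so $L^0 \binom{1}{0} = \binom{1}{0} = \binom{U_1}{U_0}$ using $U_0 = 0, U_1 = 1$, and $C^0 \binom{1}{0} = \binom{1}{0} = \binom{x_0}{y_0}$ since $(\widetilde x, \widetilde y)^{\otimes 0} = (1,0)$ is the identity of $(\mathcal C, \otimes)$.

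For the Lucas part, the inductive step is to write $L^{k+1}\binom{1}{0} = L \cdot L^k \binom{1}{0} = L \binom{U_{k+1}}{U_k}$ and compute the matrix-vector product: the first component is $P U_{k+1} - Q U_k = U_{k+2}$ by the recurrence (with characteristic polynomial $t^2 - Pt + Q$, i.e.\ $U_{k+2} = P U_{k+1} - Q U_k$), and the second component is simply $U_{k+1}$. This matches $\binom{U_{k+2}}{U_{k+1}}$ as desired. For the Pell part, I would similarly write $C^{k+1}\binom{1}{0} = C \binom{x_k}{y_k}$, whose components are $\widetilde x\, x_k + D \widetilde y\, y_k$ and $\widetilde y\, x_k + \widetilde x\, y_k$; but this is exactly the Brahmagupta product $(\widetilde x, \widetilde y) \otimes (x_k, y_k)$, which equals $(\widetilde x, \widetilde y)^{\otimes(k+1)} = (x_{k+1}, y_{k+1})$ by associativity of $\otimes$.

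There is essentially no hard part here — the statement is a direct unwinding of the definitions. The only point requiring a small remark is that for the $C$ identity one should note that $(\widetilde x, \widetilde y) \in \mathcal C$ guarantees $\widetilde x^2 - D \widetilde y^2 = 1$, so that all the iterated Brahmagupta products $(x_k, y_k)$ indeed lie on the Pell conic $\mathcal C$ and the group structure $(\mathcal C, \otimes)$ applies; this is what makes $(x_k,y_k) = (\widetilde x,\widetilde y)^{\otimes k}$ well defined. (Note also that $\det C = \widetilde x^2 - D\widetilde y^2 = 1$, consistent with the conic condition.) I would present the two inductions in parallel, emphasizing that the recurrence relation for $U_k$ and the Brahmagupta product formula are respectively read off from the bottom and top rows of $L$ and all rows of $C$, and conclude.
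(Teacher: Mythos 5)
Your proof is correct and follows essentially the same route as the paper: both arguments reduce to checking the initial conditions at $k=0$ and the one-step recurrence encoded in the rows of $L$ and $C$ (the paper phrases the Lucas half by citing the well-known fact that the entries of $L^k$ recur with the characteristic polynomial of $L$, and derives the $C$-recurrence from $(\widetilde x+\sqrt{D}\widetilde y)^k=x_k+\sqrt{D}y_k$, but this is the same computation as your induction). Your direct use of the Brahmagupta product in the inductive step for $C$ is, if anything, slightly cleaner since it avoids introducing $\sqrt{D}$.
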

\begin{proof}
Let us denote with $L^k_{ij}$ the entry $(i,j)$ of the matrix $L^k$.
It is well--known that the entries of $L^k$ are linear recurrence sequences that recur with the characteristic polynomial of $L$, i.e., $t^2 - P t + Q$.
Observing that $L_{11}^0 = 1$, $L_{11}^1 = P$ and $L_{21}^0 = 0$, $L_{21}^1 = 1$ , we have 
\begin{equation*}
 L^k \begin{pmatrix} 1 \cr 0 \end{pmatrix} = \begin{pmatrix} U_{k+1} \cr U_k \end{pmatrix}
\end{equation*}
for any $k \geq 0$.

Given $(\widetilde{x}, \widetilde{y}) \in \mathcal C$, the sequences $(x_k)_{k \geq 0}$ and $(y_k)_{k \geq 0}$ defined by $(x_k, y_k) = (\widetilde{x}, \widetilde{y})^{\otimes k}$ can be also evaluated by
\begin{equation*}
 (\widetilde{x} + \sqrt{D} \widetilde{y})^k = x_k + \sqrt{D} y_k 
\end{equation*}
from which it is straightforward to obtain
\begin{equation*}
\begin{cases} x_{k+1} = \widetilde{x} x_k + D \widetilde{y} y_k \cr y_{k+1} = \widetilde{y} x_k + \widetilde{x} y_k \end{cases},
\end{equation*}
i.e.,
\begin{equation*}
 \begin{pmatrix} \widetilde{x} & D \widetilde{y} \cr \widetilde{y} & \widetilde{x} \end{pmatrix} \begin{pmatrix} x_k \cr y_k \end{pmatrix} = \begin{pmatrix} x_{k+1} \cr y_{k+1} \end{pmatrix}.
\end{equation*}
Observing that $(x_0, y_0)$ is the identity of $(\mathcal C, \otimes)$, i.e., $(1,0)$, we have
\begin{equation*}
 C^k \begin{pmatrix} 1 \cr 0 \end{pmatrix} = \begin{pmatrix} x_k \cr y_k \end{pmatrix}
\end{equation*}
for any $k \geq 0$.
\end{proof}

Now, we can see that the strong Pell test is connected with a stronger version of the Lucas test, which we will call \emph{double Lucas test}.
Indeed, by Lemma \ref{lem:main} and Lemma \ref{prop:LC}, if $p$ is prime, then
\begin{equation*}
 U_{p-1} \equiv 0 \pmod p, \quad U_p \equiv 1 \pmod p 
\end{equation*} 
or
\begin{equation*}
 U_{p+1} \equiv 0 \pmod p, \quad U_{p+2} \equiv Q \pmod p 
\end{equation*}
for $\sqrt{P^2 - 4 Q} \in \mathbb Z_p^*$ or $\sqrt{P^2 - 4 Q} \not\in \mathbb Z_p^*$, respectively. We call double Lucas pseudoprimes the odd composite numbers that satisfy the above conditions.

Since $\det C = 1$ and $\det L = Q$, the matrices $C$ and $L$ are similar only if $Q = 1$.
In this case, we can consider the matrix
\begin{equation*}
 R_1 = \begin{pmatrix} 1 & P \cr 0 & 2 \end{pmatrix}, 
\end{equation*}
 and we have 
$$R_1^{-1} L R_1 = \begin{pmatrix} P/2 & P^2/2 -2 \cr 1/2 & P/2 \end{pmatrix},$$
choosing $\widetilde{x} = P/2$, $\widetilde{y} = 1/2$, $D = P^2 - 4$, we get $R_1^{-1} L R_1 = C$.
In other words, if $n$ is a double Lucas pseudoprime, for parameters $P$ and $Q=1$, then $n$ is a strong Pell pseudoprime, for parameters $\widetilde{x} = P/2$, $\widetilde{y} = 1/2$, $D = P^2 - 4$.
Let us note that $2$ must be invertible in $\mathbb Z_n$.

On the other hand, given
\begin{equation*}
 R_2 = \begin{pmatrix} 1 & -\widetilde{x} \cr 0 & \widetilde{y} \end{pmatrix} 
\end{equation*}
we have
\begin{equation*}
 R_2^{-1} C R_2 = \begin{pmatrix} 2 \widetilde{x} & -\widetilde{x}^2 + D \widetilde{y}^2 \cr 1 & 0 \end{pmatrix}, 
\end{equation*}
which is $L$ for $P = 2 x_1$ and $Q = 1$.
This means that if $n$ is a strong Pell pseudoprime, for parameters $\widetilde{x}, \widetilde{y}, D$, then $n$ is a double Lucas pseudoprime, for parameters $P = 2 x_1$ and $Q = 1$.
Note that in this case $D$ is not necessarily equal to $P^2 - 4$ (the discriminant of the characteristic polynomial of the Lucas sequence), this happens only for $\widetilde{y} = \pm 1/2$.
However, since $D = (\widetilde{x}^2-1)/\widetilde{y}^2$ and $P^2 - 4 = 4 (\widetilde{x}^2 - 1)$, $D$ is a quadratic residue in $\mathbb Z_n$ if and only if $P^2 - 4$ is. We summarize this in the following proposition.

\begin{proposition}
If $n$ is a double Lucas pseudoprime for the parameters $P$ and $Q = 1$, then $n$ is a strong Pell pseudoprime for the parameters $\widetilde{x} = P/2$, $\widetilde{y} = 1/2$, $D = P^2 - 4$. \\
If $n$ is a strong Pell pseudoprime for the parameters $\widetilde{x}$, $\widetilde{y}$ and $D$, then $n$ is a double Lucas pseudoprime for the parameters $P = 2 \widetilde{x}$ and $Q = 1$.
\end{proposition}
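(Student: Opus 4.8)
The plan is to recast each of the two pseudoprimality conditions as one and the same matrix congruence of the form $M^{m}\begin{pmatrix}1\cr0\end{pmatrix}\equiv\begin{pmatrix}1\cr0\end{pmatrix}\pmod n$, and then to transport it through the two similarities exhibited just before the statement. First I would apply Lemma~\ref{prop:LC} to the double Lucas condition for parameters $P$, $Q=1$: in the branch $\bigl((P^2-4)/n\bigr)=1$ it reads $U_{n-1}\equiv0$, $U_{n}\equiv1$, while in the branch $\bigl((P^2-4)/n\bigr)=-1$ it reads $U_{n+1}\equiv0$, $U_{n+2}\equiv Q=1$; since $Q=1$ both branches say precisely
\begin{equation*}
 L^{m}\begin{pmatrix}1\cr0\end{pmatrix}=\begin{pmatrix}U_{m+1}\cr U_{m}\end{pmatrix}\equiv\begin{pmatrix}1\cr0\end{pmatrix}\pmod n,\qquad m=n-\bigl((P^2-4)/n\bigr).
\end{equation*}
In the same way, Lemma~\ref{prop:LC} turns the strong Pell condition for parameters $\widetilde x$, $\widetilde y$, $D$ into $C^{m'}\begin{pmatrix}1\cr0\end{pmatrix}\equiv\begin{pmatrix}1\cr0\end{pmatrix}\pmod n$ with $m'=n-(D/n)$. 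Thus the proposition reduces to two tasks: moving such a congruence between $L$ and $C$, and checking that the two exponents $m$ and $m'$ agree.

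For the transport I would use that conjugation commutes with taking powers, so from $C=R_1^{-1}LR_1$ one obtains $C^{k}=R_1^{-1}L^{k}R_1$ over $\mathbb Z_n$ (legitimate since $\det R_1=2$ is a unit for odd $n$), and from $L=R_2^{-1}CR_2$ one obtains $L^{k}=R_2^{-1}C^{k}R_2$ over $\mathbb Z_n$ (here $\det R_2=\widetilde y$, which is invertible modulo $n$, as is implicit throughout the Pell setting). The four matrices $R_1,R_1^{-1},R_2,R_2^{-1}$ are all upper triangular with first column $\begin{pmatrix}1\cr0\end{pmatrix}$, hence each of them fixes $\begin{pmatrix}1\cr0\end{pmatrix}$. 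Therefore $L^{m}\begin{pmatrix}1\cr0\end{pmatrix}\equiv\begin{pmatrix}1\cr0\end{pmatrix}$ forces
\begin{equation*}
 C^{m}\begin{pmatrix}1\cr0\end{pmatrix}=R_1^{-1}L^{m}R_1\begin{pmatrix}1\cr0\end{pmatrix}=R_1^{-1}L^{m}\begin{pmatrix}1\cr0\end{pmatrix}\equiv R_1^{-1}\begin{pmatrix}1\cr0\end{pmatrix}=\begin{pmatrix}1\cr0\end{pmatrix}\pmod n,
\end{equation*}
and the symmetric computation with $R_2$ in place of $R_1$ handles the reverse direction.

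It then remains to reconcile the exponents. In the first implication $D=P^2-4$ by the very choice of parameters, so $m=m'$ at once. In the second implication $P=2\widetilde x$, and using the defining relation $\widetilde x^2-D\widetilde y^2=1$ one gets $P^2-4=4(\widetilde x^2-1)=4\widetilde y^2 D$; since $n$ is odd and $\widetilde y$ is invertible modulo $n$, the complete multiplicativity of the Jacobi symbol in its numerator yields $\bigl((P^2-4)/n\bigr)=(4/n)(\widetilde y^2/n)(D/n)=(D/n)$, so again $m=m'$. Putting the transport and the exponent identity together proves both implications.

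I expect the main obstacle to be exactly this exponent reconciliation. The similarity of $L$ and $C$ only relates $L^{k}$ to $C^{k}$ for one fixed $k$, whereas the double Lucas and strong Pell tests a priori probe different exponents $n-\bigl((P^2-4)/n\bigr)$ and $n-(D/n)$; everything hinges on the elementary identity $P^2-4=4\widetilde y^2 D$ — that is, on $D$ and $P^2-4$ differing only by the square factor $4\widetilde y^2$ — which forces those two Jacobi symbols to coincide, together with the harmless fact that $2$ and $\widetilde y$ are units modulo the odd integer $n$.
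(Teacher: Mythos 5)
Your proof is correct and follows essentially the same route as the paper: conjugation by the same matrices $R_1$ and $R_2$, combined with the observation that $P^2-4$ and $D$ differ only by the square factor $4\widetilde{y}^{2}$, so the relevant Jacobi symbols coincide and the exponents agree. You are in fact more explicit than the paper about two details it leaves implicit, namely that $R_1^{\pm1}$ and $R_2^{\pm1}$ fix the vector $(1,0)^{T}$ and that $2$ and $\widetilde{y}$ must be units modulo $n$.
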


Let us note that, fixed the parameters $P$ and $Q = 1$ for the Lucas test (for checking, e.g., the primality of all the integers in a certain range), there is not a corresponding strong Pell test with fixed parameters $D$, $\widetilde{x}$ and $\widetilde{y}$ as integer numbers.
Indeed, given any $P$ and $Q = 1$, we have seen that $\widetilde{x} = P/2$, $\widetilde{y} = 1/2$, $D = P^2 - 4$ are the corresponding parameters of the strong Pell test, but these values depend on the integer $n$ we are testing (remember that in this context $1/2$ is the inverse of $2$ in $\mathbb Z_n$).

Moreover, in general, we are not able to fix the integer parameters $D, \widetilde{x}, \widetilde{y}$ in the strong Pell test for checking the primality of all the integers in a given range, because it is necessary that $\widetilde{x}^2 - D \widetilde{y}^2 \equiv 1 \pmod n$ and this can not be true for any integer $n$.
For overcoming these issues, the use of a parametrization of the conic $\mathcal C$ can be helpful.
In \cite{BM16}, the authors provided the following map
\begin{equation*}
 \Phi : \begin{cases} \mathcal R \cup \{\alpha\} \rightarrow \mathcal C \cr a \mapsto \left( \cfrac{a^2 + D}{a^2 - D}, \cfrac{2 a}{a^2 - D} \right), \quad a \not= \alpha \cr \alpha \mapsto (1,0) \end{cases} 
\end{equation*}
where $\alpha \not \in \mathcal R$ is the point at the infinity of such a parametrization of $\mathcal C$.
When $\mathcal R$ is a field and $t^2 - D$ is irreducible in $\mathcal R$, the map is always defined, otherwise there are values of $a$ such that $\Phi(a)$ can not be evaluated.
In this way, we can consider the strong Pell test with fixed parameters $D$ and $a$, in the sense that $\widetilde{x} = (a^2 + D)/(a^2 - D)$ and $\widetilde{y} = 2 a/(a^2 - D)$.

\begin{example} \label{exm:P-even}
Given $P = 4$ and $Q = 1$, the Lucas pseudoprimes up to 5000 are
\begin{equation*}
 65, 209, 629, 679, 901, 989, 1241, 1769, 1961, 1991, 2509, 2701, 2911, 3007, 3439, 3869,
\end{equation*}
whereas the double Lucas pseudoprimes are
\begin{equation*}
 209, 901, 989, 2701, 2911, 3007, 3439.
\end{equation*}
When $P$ is even, we are always able to find an equivalent strong Pell test, providing all the same pseudoprimes of the double Lucas test.
Indeed, it is sufficient to choice $D$ and $a$ such that $(a^2 + D) / (a^2 - D)$ is the integer number $P/2$.
For instance in this case, taking $D = 3$ and $a = 3$, we have $\widetilde{x} = 2$ and $\widetilde{y} = 1$.
\end{example}

\begin{rmk}
A double Lucas test with parameters $P$ and $Q = 1$ is equivalent to the strong Pell test with parameters $D = P^2 - 4$ and $a = P + 2$.
Indeed, in this case, exploiting the parametrization $\Phi$, we get $\widetilde{x} = P/2$ and $\widetilde{y} = 1/2$.
Note that using this method, the strong Pell test equivalent to the double Lucas test considered in Example \ref{exm:P-even} has parameters $D = 12$ and $a = 6$.
This means that there are strong Pell tests with different parameters which are equivalent to each others.
\end{rmk}


We conclude observing that the double Lucas test for any value of $P$ and $Q$ can be described in terms of the Barahmagupta product.
The Pell equation can be introduced over a general ring $\mathcal R$ considering the quotient ring $\mathcal A = \mathcal R[t] / (t^2 - D)$, for $D \in \mathcal R$.
The product of two elements $x_1 + y_1 t, x_2 + y_2 t \in \mathcal A$, i.e., $(x_1, y_1), (x_2, y_2) \in \mathcal A$, coincide with the Brahmagupta product and the elements of norm 1 define $\mathcal C$.
If we take $(x_1, y_1) \in \mathcal A$ with norm $Q$, considering $(x_n, y_n) := (x_1, y_1)^{\otimes n}$, we still have 
\begin{equation*}
C^n \begin{pmatrix} 1 \cr 0 \end{pmatrix} = \begin{pmatrix} x_n \cr y_n \end{pmatrix}
\end{equation*}
and we can still use the matrices $R_1$ and $R_2$ for passing from $L$ to $C$ and viceversa, without any restriction on the choice of $Q$.
Hence the double Lucas test is connected with the Brahmagupta product also for $Q \not = 1$. Hence, we define the \emph{generalized Pell pseudoprimes}, for the parameters $D, \tilde x, \tilde y$, as the odd composite integers $n$ such that
\[ \begin{cases} (\tilde x, \tilde y)^{\otimes n+1} \equiv (Q, 0) \pmod n,\quad \text{if $\cfrac{D}{n}$ = -1} \cr 
(\tilde x, \tilde y)^{\otimes n-1} \equiv (1, 0) \pmod n,\quad \text{if $\cfrac{D}{n}$ = 1} \end{cases}.\]

\section{Numerical experiments} \label{sec:exp}

In this section, we show the behaviour of some primality tests in terms of number of pseudoprimes that pass them. In particular, we first focus on the classical Lucas test and we show how the use of the double Lucas test decreases a lot of the number of composite integers that are stated primes. Then, we see how the use of matrices introduced in the previous section allow the definition of many new primality tests and we study them for some different values of the parameters. Similarly, we also study the strong Pell test. In these experiments, we will see that the performances of the above tests are very sensitive with respect to the values of the parameters. For this reason in subsection \ref{sub:self} we study these tests setting the parameters by using methods à la Selfridge. In facts, the Selfridge method was introduced for finding good values of the parameters of the Lucas and strong Lucas tests, see \cite{BW80} and observe that in OEIS the sequences of Lucas pseudoprimes (A217120) and strong Lucas pseduoprimes (A217255) are defined by using the parameters $P$ and $Q$ with the Selfridge method.  

\subsection{Tests with fixed parameters}

The Lucas test depends on two parameters $P$ and $Q$ that determine the Lucas sequence $(U_k)_{k \leq 0}$ used in equation \eqref{eq:Lucas} for testing the primality of an integer number. In Figure \ref{fig:Lucas}, we show the number of Lucas pseudoprimes up to $10^5$ for $-3 \leq P \leq 3$ and $-3 \leq Q \leq 3$, avoiding trivial choices ot the parameters like, e.g., $P=1$, $Q=1$ or $P=2$, $Q=-1$. For instance, we can see that for $P=-3$, $Q=-3$ there are 45 Lucas pseudoprimes, for $P= -3$, $Q= -2$ there are 94 Lucas pseudoprimes, and so on.

Similarly, the double Lucas test depends on the same two parameters $P$ and $Q$, since the double Lucas pseudoprimes are the odd composite integers $n$ satisfying
\[ \begin{cases} U_{n-1} \equiv 0 \pmod n \quad \text{and} \quad U_{n} \equiv 1 \pmod n, \quad \text{if $\cfrac{D}{n}$ = 1} \cr 
U_{n+1} \equiv 0 \pmod n \quad \text{and} \quad U_{n+2} \equiv Q \pmod n, \quad \text{if $\cfrac{D}{n}$ = -1}
\end{cases}. \]
In Figure \ref{fig:double}, we can see how the double Lucas test decreases the number of pseudoprimes with respect to the Lucas test. For instance, for $P = -3$ and $Q = -3$ there are only 2 double Lucas pseudoprimes up to $10^5$, against the 45 Lucas pseudoprimes; for $P = -3$ and $Q = -2$ there are no double Lucas pseudoprimes (the first one is 220729 and the second one is 334153, no further pseudoprimes are found up to $5 \times 10^5$), whereas we found 94 Lucas pseudoprimes. However, we also have to observe that in some cases, the double Lucas test does not provide great improvements in this sense, for example for $P = -3$ and $Q = 1$ we found $50$ double Lucas pseudoprimes and $91$ Lucas pseudoprimes; for $P = -3$ and $Q = 2$ we found the same number of pseudoprimes. 

\begin{figure} \label{fig:Lucas}
\includegraphics[scale=0.52]{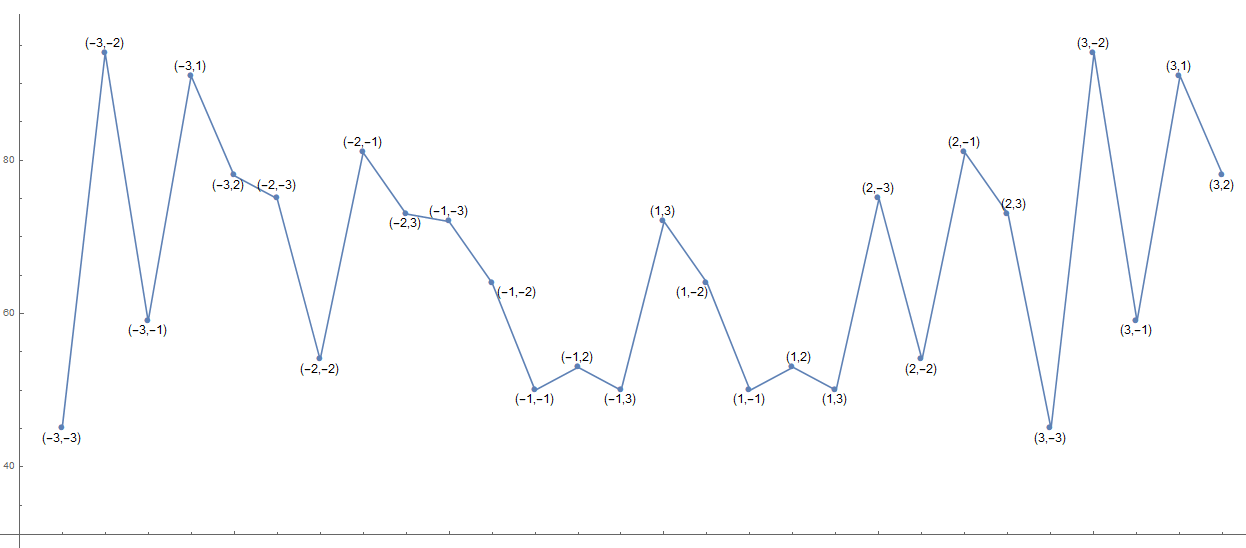}
\caption{Number of Lucas pseudoprimes up to $10^5$ for different values of $(P, Q)$.}
\end{figure}

\begin{figure} \label{fig:double}
\includegraphics[scale=0.52]{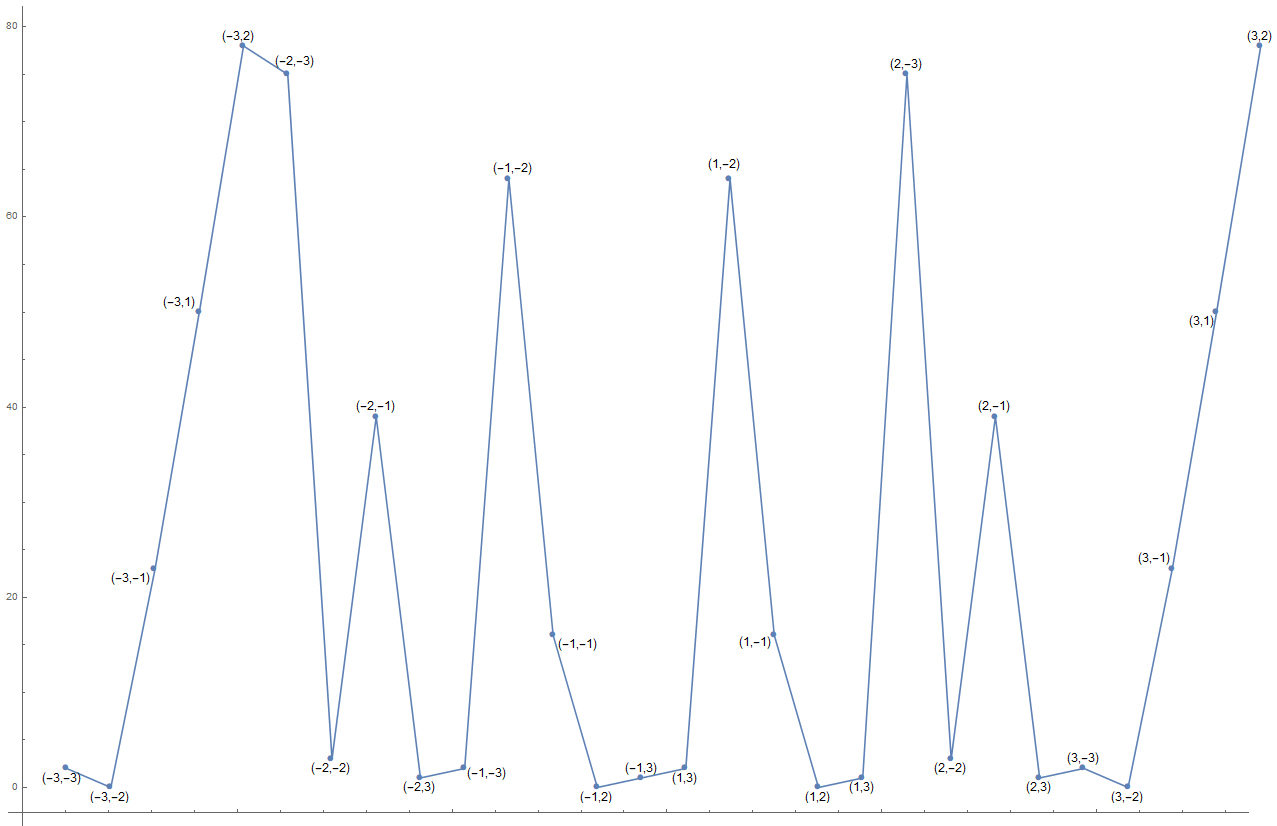}
\caption{Number of double Lucas pseudoprimes up to $10^5$ for different values of $(P, Q)$.}
\end{figure}

Thanks to Lemma \ref{lem:main}, we are able to define primality tests based on linear recurrent sequences of degree two, different from the Lucas sequences. Specifically, if we take the matrix
\[ \begin{pmatrix} P & - Q \cr R & 0 \end{pmatrix} \]
and consider the sequences $(\tilde U_k)_{k \geq 0}$ and $(\tilde V_k)_{k \geq 0}$ defined by $M^k \begin{pmatrix} 1 \cr 0 \end{pmatrix} = \begin{pmatrix} \tilde V_k \cr \tilde U_k \end{pmatrix}$, we deal with linear recurrent sequences with characteristic polynomial $t^2 - P t + QR$ and initial conditions $\tilde U_0 = 0, \tilde U_1 = R$ and $\tilde V_0 = 1, \tilde V_1 = P$. Note that $(\tilde V_k)_{k \geq 0}$ is the same sequence used in the double Lucas test, whereas $(\tilde U_k)_{k \geq 0}$ is different due to the initial conditions.
In this case the pseudoprimes are the odd composite integers satisfying

\begin{equation} \label{eq:matrix-pseudo}
\begin{cases} U_{n-1} \equiv 0 \pmod n \quad \text{and} \quad U_{n} \equiv 1 \pmod n, \quad \text{if $\cfrac{D}{n}$ = 1} \cr 
U_{n+1} \equiv 0 \pmod n \quad \text{and} \quad U_{n+2} \equiv QR \pmod n, \quad \text{if $\cfrac{D}{n}$ = -1}
\end{cases}. 
\end{equation}

The corresponding primality test depends on the parameters $P, Q, R$.
In Figure \ref{fig:matrix}, we reported the number of pseudoprimes up to $10^5$ for different values of the parameters. In particular, we used $R = -3, -2, -1, 2, 3$ (note that $R=1$ corresponds to the double Lucas test) and random values for $P$ and $Q$ between $-9$ and $9$. We can observe that the results strongly depend on the values of the parameters. For instance for $R = -1$, $P = 1$, $Q = 2$ we have no pseudoprimes up to $10^5$ (the only psuedoprime up to $5 \times 10^5$ is 226801), on the contrary for $R = 2$, $P = 3$, $Q = 2$ we found 123 pseudoprimes.

\begin{figure} \label{fig:matrix}
\includegraphics[scale=0.52]{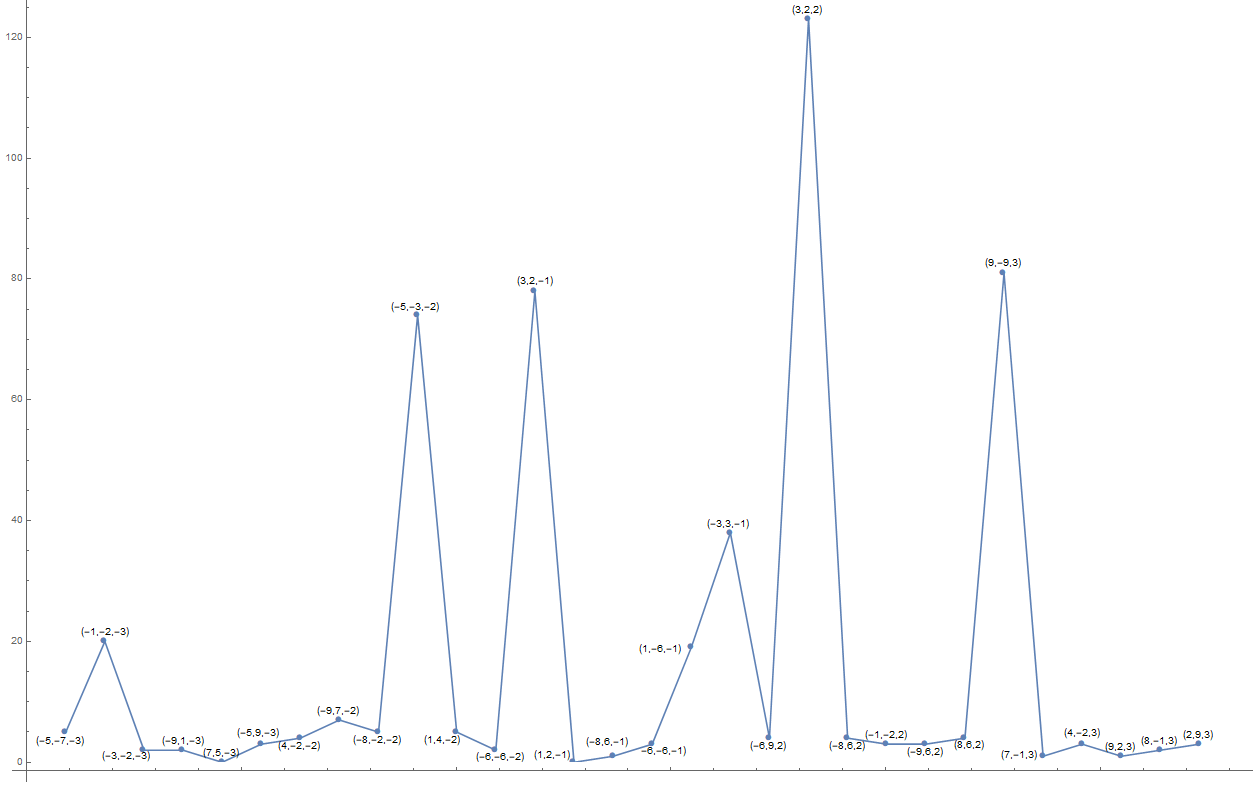}
\caption{Number of pseudoprimes up to $10^5$ defined by \eqref{eq:matrix-pseudo} for different values of $(P, Q, R)$.}
\end{figure}

Finally, we discuss the strong and generalized Pell pseudoprimes.  As we have shown in the previous section, they are connected to the double Lucas pseudoprimes. The strong Pell test with parameters $D$ and $a$ is equivalent to the double Lucas test with parameters $P = 2 (a^2 + D) / (a^2 -D)$ and $Q = 1$. The generalized Pell test depends on the parameters $D$, $\tilde x$, $\tilde y$ and there is not an equivalent double Lucas test with fixed parameters $P$ and $Q$. Thus, we only focus on some numerical experiments regarding this test. In Figure \ref{fig:gen-pell}, we show the number of generalized Pell pseudoprimes up to $10^5$ for $-3 \leq D \leq 3$ and $\tilde x$, $\tilde y$ randomly chosen between $-9$ and $9$. Also in this case, the performances are heavily affected by the choice of the parameters

\begin{figure} \label{fig:gen-pell}
\includegraphics[scale=0.52]{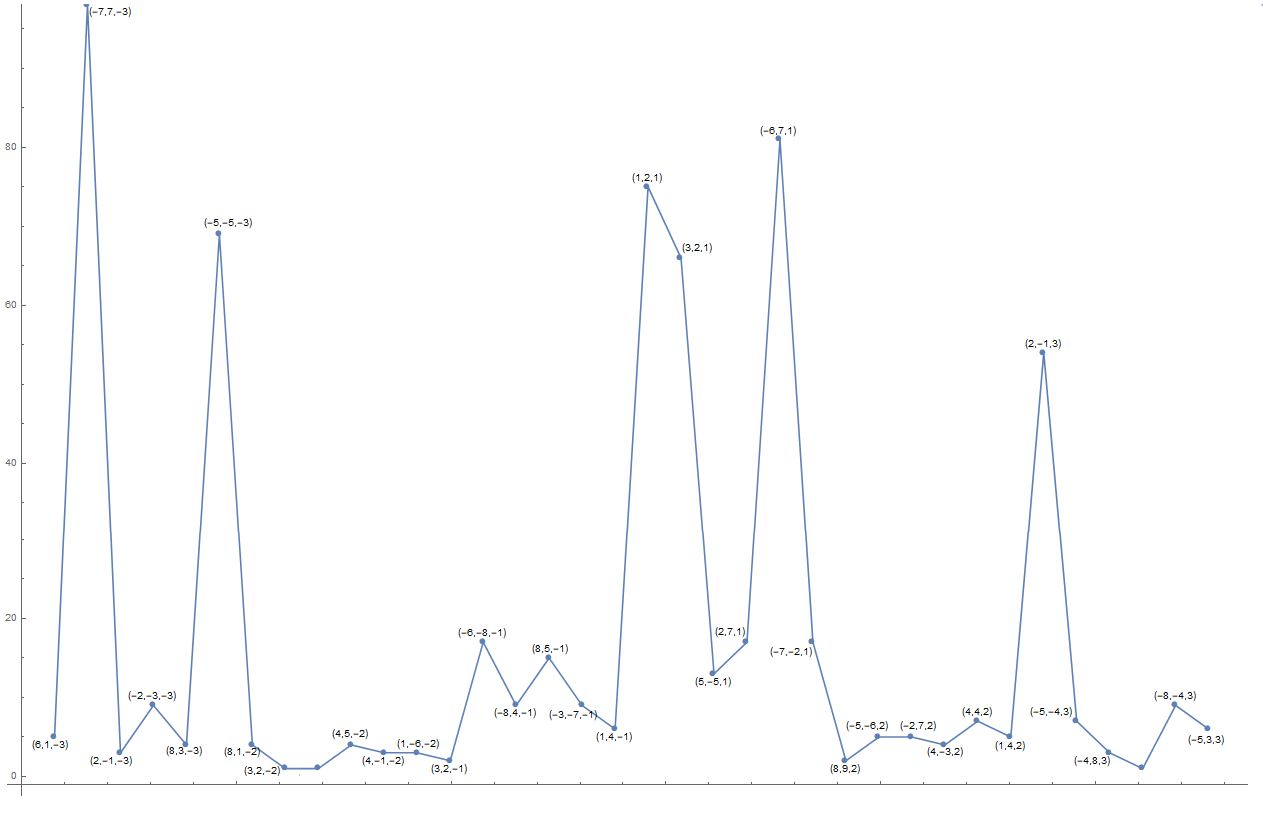}
\caption{Number of generalized Pell pseudoprimes up to $10^5$ for different values of $(\tilde x, \tilde y, D)$.}
\end{figure}

\subsection{Tests with Selfridge method} \label{sub:self}

In the previous section, we have seen that the performances of the studied primality tests are heavily affected by the choice of the parameters. The Selfridge method is a standard way for choosing the parameters of the Lucas test. Given the integer $n$ to test, the parameters of the Lucas test are chosen by the Selfridge method in the following way
\begin{itemize}

\item Set $P = 1$.

\item Set $D$ as the first integer in the sequence $5, -7, 9, -11, \ldots$ such that $\left( \cfrac{D}{n} \right) = -1$.

\item Set $Q = \cfrac{1 - D}{4}$. 

\end{itemize}

The sequence of Lucas pseudoprimes, using the Selfridge method, is 
\[323, 377, 1159, 1829, 3827, 5459, 5777, 9071, 9179, 10877, 11419, 11663, 13919, 14839, \ldots\]
see A217120 in OEIS. If we apply the Selfridge method for the choice of the parameters to the double Lucas test, we find the following sequence of pseudoprimes
\[ 5777, 10877, 75077, 100127, 113573, 161027, 162133, 231703, \ldots \]
which are the Frobenius pseudoprimes (A212423). Since the Selfridge method is a very standard and useful techniques for choosing the parameters in these primality tests, here we adapt the method to the new primality tests introduced previously. We will observe that the use of the Selfridge method with the primality test defined by \eqref{eq:matrix-pseudo} and with the generalized Pell pseudoprimes gives very interesting and powerful results. In the following we describe the adaptation of the Selfridge method to these tests.

Given the integer $n$ to test, for the primality test defined by \eqref{eq:matrix-pseudo}, choose the parameters in the following way:

\begin{itemize}

\item Set $P = 1$ and $R = 2$.

\item Set $D$ as the first integer in the sequence $-7, 9, -15, 17, -23, 25, -31, 33 \ldots$ such that $\left( \cfrac{D}{n} \right) = -1$.

\item Set $Q = \cfrac{1 - D}{8}$.

\end{itemize}

Using this method for the choice of the parameters, we did not find any pseudoprime up to $10^7$. Note that if we set $R = \pm 1$, we find the Frobenius pseudoprimes, thus we used $R = 2$ and we modified the sequence where searching $D$ in order to obtain an integer value for $Q$.

Given the integer $n$ to test, for the generalized Pell test, choose the parameters in the following way:

\begin{itemize}

\item Set $\tilde x = 3$ and $\tilde y = 2$.

\item Set $D$ as the first integer in the sequence $5, -7, 9, -11, \ldots$ such that $\left( \cfrac{D}{n} \right) = -1$.

\end{itemize}

Using this method, we did not find any pseudoprime up to $10^{10}$. 

In conclusion, the primality tests introduced in this paper, joint to the Selfridge method, appear to be very promising in terms of finding good primality tests. Indeed, usually, in the Lucas test and similar ones, there are small pseudoprimes (the first Lucas pseudoprime is 323 and the first Frobenius pseudoprime is 5777), whereas, for the generalized Pell test, the first pseudoprime must be greater than $10^{10}$. In future works, it should be interesting to find the first generalized Pell pseudoprime, as well as investigating different choices for $\tilde x$ and $\tilde y$. Moreover, it could be very itneresting to find some theoretical results about the distribution of generalized Pell pseudoprimes.

\section*{Acknowledgment}
A.~J. Di Scala is member of GNSAGA of INdAM and of DISMA Dipartimento di Eccellenza MIUR 2018-2022.\\
We would like to thank Dr. Tiziana Armano for the great help and support in the numerical experiments.

\end{document}